\theoremstyle{plain}
\newtheorem{theorem}{Theorem}
\newtheorem{corollary}{Corollary}
\newtheorem{lemma}{Lemma}
\theoremstyle{definition}
\newtheorem{definition}{Definition}
\newtheorem{example}{Example}
\newtheorem{remark}{Remark}
\newcommand{\N}{\mathbb{N}}
\def\d{\underbar{\em d}}
\title[Bipartite graph vertex sequences]{A sharp refinement of a result of {A}lon, {B}en-{S}himon and {K}rivelevich on bipartite graph vertex sequences}
\author{Grant Cairns}
\author{Stacey Mendan}
\author{Yuri Nikolayevsky}
\address{Dept of Mathematics and Statistics, La Trobe University, Melbourne, Australia 3086}
\email{G.Cairns@latrobe.edu.au}
\email{spmendan@students.latrobe.edu.au}
\email{Y.Nikolayevsky@latrobe.edu.au}
\keywords{bipartite graph, vertex degree, bipartite graphic sequence}
\subjclass{05C07}
\begin{document}

\maketitle

\begin{abstract} We give a sharp refinement of a result of Alon, Ben-Shimon and Krivelevich. This gives a sufficient condition for a finite sequence of positive integers to be the vertex degree list of both parts of a bipartite graph. The condition depends only on the length of the sequence and its largest and smallest elements.

\end{abstract}

\section{Introduction}

Recall that a finite sequence $\d=(d_1,\dots,d_n)$ of positive integers is
\emph{graphic} if there is a simple graph with $n$ vertices having $\d$ as its list of  vertex degrees.
A pair  $(\d_1,\d_2)$ of sequences (possibly of different length) is \emph{bipartite graphic} if there is a simple, bipartite graph whose parts have $\d_1,\d_2$ as their respective lists of  vertex degrees.
We say that a sequence $\d$ is \emph{bipartite graphic} if the pair $(\d,\d)$ is bipartite graphic; that is, if there is a simple, bipartite graph whose two parts each have $\d$ as their list of  vertex degrees. The classic Erd\H{o}s--Gallai Theorem gives a necessary and sufficient condition for a sequence to be graphic. Similarly, the Gale--Ryser Theorem~\cite{Gale,Ryser} gives a necessary and sufficient condition for a pair of sequences to be bipartite graphic. In particular, the Gale--Ryser Theorem gives a necessary and sufficient condition for a single sequence to be bipartite graphic.

In \cite[Theorem~6]{ZZ}, Zverovich and Zverovich gave a sufficient condition, for a sequence to be graphic, depending only on the length of the sequence and its largest and smallest elements. A sharp refinement of this result is given in \cite{CMN}. In \cite[Corollary~2.2]{ABK}, Alon, Ben-Shimon and Krivelevich gave a result for bipartite graphic sequences, which is directly analogous to the theorem of Zverovich--Zverovich. The purpose of the present paper is to give a sharp refinement of the Alon--Ben-Shimon--Krivelevich result.

Here is the Alon--Ben-Shimon--Krivelevich result:

\begin{theorem}[{\cite[Corollary~2.2]{ABK}}]\label{T:ABK}
Suppose that $\emph{\d}$ is a finite sequence of positive integers having length $n$, maximum element $a$ and minimum element $b$. If for a real number $x\geq1$, we have
\begin{equation}\label{E:ABK}
a\leq\min\left\{xb,\frac{4xn}{(x+1)^2}\right\},
\end{equation}
then $\emph{\d}$ is bipartite graphic. \end{theorem}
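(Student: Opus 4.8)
The plan is to verify the Gale--Ryser condition directly. Assume the entries are sorted in non-increasing order, so that $d_1=a\ge d_2\ge\cdots\ge d_n=b$. Applying the Gale--Ryser Theorem to the pair $(\underline{d},\underline{d})$, the sequence $\underline{d}$ is bipartite graphic if and only if
\[
\sum_{i=1}^{k}d_i\le\sum_{j=1}^{n}\min(d_j,k)\qquad\text{for every }k=1,\dots,n .
\]
The whole proof thus reduces to establishing this family of inequalities under hypothesis \eqref{E:ABK}.

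First I would dispose of the two easy ranges of $k$. If $k\le b$, then $\min(d_j,k)=k$ for every $j$, so the right-hand side equals $nk$, while the left-hand side is at most $ka$; since $\tfrac{4x}{(x+1)^2}\le 1$ for all $x\ge 1$, inequality \eqref{E:ABK} forces $a\le n$, and the bound follows. If $k\ge a$, then $\min(d_j,k)=d_j$ for every $j$, so the right-hand side is the full sum $\sum_{j=1}^{n}d_j$, which dominates the partial sum $\sum_{i=1}^{k}d_i$. Hence only the middle range $b<k<a$ requires genuine work.

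For $b<k<a$ I would estimate the deficit
\[
\sum_{i=1}^{k}d_i-\sum_{j=1}^{n}\min(d_j,k)=\sum_{i=1}^{k}\max(d_i-k,0)-\sum_{j=k+1}^{n}\min(d_j,k).
\]
Each term of the first sum is at most $a-k$, giving at most $k(a-k)$, while each term of the second sum is at least $b$ (since $d_j\ge b$ and $k\ge b$), giving at least $(n-k)b$. So it suffices to show $k(a-k)\le(n-k)b$, that is, that the quadratic $g(k):=k^{2}-(a+b)k+nb$ is non-negative. As $g$ is an upward parabola with minimum value $nb-\tfrac14(a+b)^{2}$, everything comes down to the single inequality $4nb\ge(a+b)^{2}$.

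The last step, deducing $4nb\ge(a+b)^{2}$ from \eqref{E:ABK}, is where the two parts of the hypothesis combine, and I expect it to be the crux of the argument. From $a\le\frac{4xn}{(x+1)^{2}}$ one gets $4nb\ge\frac{(x+1)^{2}}{x}\,ab$, so it is enough to prove $\frac{(x+1)^{2}}{x}\,ab\ge(a+b)^{2}$. Writing $t=a/b$, which by $a\le xb$ satisfies $1\le t\le x$, this becomes $\frac{(x+1)^{2}}{x}\,t\ge(t+1)^{2}$; the difference $\frac{(x+1)^{2}}{x}\,t-(t+1)^{2}$ is a concave function of $t$ equal to $\frac{(x-1)^{2}}{x}\ge 0$ at $t=1$ and to $0$ at $t=x$, hence non-negative throughout $[1,x]$. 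This completes the proof and also exhibits its sharpness: equality runs through the chain exactly when $t=x$ together with $a=\frac{4xn}{(x+1)^{2}}$, which is precisely the extremal configuration that the refinement in this paper is designed to exploit.
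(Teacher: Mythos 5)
Your proof is correct. It is worth noting that the paper does not actually reprove Theorem~\ref{T:ABK}: it quotes it from \cite{ABK}, shows in the introduction that it is equivalent to the inequality $4nb\ge (a+b)^2$ of Theorem~\ref{T:ABKsimp}, and then supersedes it with the sharper Theorem~\ref{T:ABKsharp}, proved via the Zverovich--Zverovich criterion restricted to strong indices (or via a reduction to ordinary graphic sequences). So your self-contained Gale--Ryser verification is a genuinely different route, though the two are close relatives: the Zverovich--Zverovich inequality $\sum_{i=1}^k(d_i+in_{k-i})\le kn$ used throughout the paper is exactly the Gale--Ryser inequality $\sum_{i=1}^k d_i\le\sum_j\min(d_j,k)$ rewritten, and the quadratic $k^2-(a+b)k+nb\ge 0$ to which you reduce the middle range $b<k<a$ is precisely the quadratic governing Theorem~\ref{T:ABKab} and (in the variable $K$) Lemma~\ref{L:ABKsharpproof}. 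Your final step, deducing $(a+b)^2\le 4nb$ from \eqref{E:ABK}, reproduces the paper's equivalence argument (monotonicity of $4xn/(x+1)^2$ in $x$) by a concavity argument in $t=a/b$ instead; both are fine. What your crude termwise bounds ($\max(d_i-k,0)\le a-k$ and $\min(d_j,k)\ge b$) cannot deliver is the extra $-\tfrac14$ in the case $a\not\equiv b\pmod 2$, which is exactly what the paper's strong-index machinery (the discrete optimisation over integer $K$ in Section~\ref{S:proofi}) is built to extract; for the statement as given, however, your argument is complete.
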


As we will explain at the end of this introduction, Theorem \ref{T:ABK} can be rephrased in the following equivalent form:
\begin{theorem}\label{T:ABKsimp}
Suppose that $\emph{\d}$ is a finite sequence of positive integers having length $n$, maximum element $a$ and minimum element $b$.
Then $\emph{\d}$ is bipartite graphic if
\begin{equation}\label{E:ABKsimp}
nb\geq\frac{(a+b)^2}{4}.
\end{equation}
\end{theorem}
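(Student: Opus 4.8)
The plan is to deduce Theorem~\ref{T:ABKsimp} directly from Theorem~\ref{T:ABK}: assuming the hypothesis \eqref{E:ABKsimp}, I will exhibit a single real number $x\geq 1$ for which \eqref{E:ABK} holds, and then invoke Theorem~\ref{T:ABK} to conclude that $\d$ is bipartite graphic. The inequality \eqref{E:ABK} packages two separate constraints: the bound $a\leq xb$, which is equivalent to $x\geq a/b$, and the bound $a\leq \frac{4xn}{(x+1)^2}$, which is equivalent to $a(x+1)^2\leq 4xn$. Since $a$ is the maximum and $b$ the minimum of a sequence of positive integers, we have $a\geq b>0$, hence $a/b\geq 1$; so the standing requirement $x\geq 1$ will be automatic as soon as we choose $x\geq a/b$.

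I would first observe that the function $g(x)=\frac{4nx}{(x+1)^2}$ is decreasing on $[1,\infty)$, since $g'(x)=4n\,\frac{1-x}{(x+1)^3}\leq 0$ there. Consequently, on the admissible range $x\geq a/b$ the largest value of $g$ occurs at the left endpoint $x=a/b$. This identifies $x=a/b$ as the natural, and in fact optimal, value to feed into Theorem~\ref{T:ABK}: it makes the first constraint $a\leq xb$ hold with equality while giving the second constraint $a\leq g(x)$ its best possible chance of being satisfied. So I would simply set $x=a/b$.

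The heart of the argument is then a one-line substitution. With $x=a/b$ one computes
\[
\frac{4xn}{(x+1)^2}=\frac{4anb}{(a+b)^2},
\]
so the second constraint $a\leq \frac{4xn}{(x+1)^2}$ becomes $a\leq \frac{4anb}{(a+b)^2}$, which after cancelling the positive factor $a$ is precisely $(a+b)^2\leq 4nb$, i.e.\ the hypothesis \eqref{E:ABKsimp}. Thus \eqref{E:ABKsimp} guarantees that $x=a/b$ satisfies both constraints in \eqref{E:ABK}, and Theorem~\ref{T:ABK} yields that $\d$ is bipartite graphic, proving Theorem~\ref{T:ABKsimp}.

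For completeness, to justify the claim that the two formulations are genuinely \emph{equivalent}, I would also check the reverse implication: if some $x\geq 1$ satisfies \eqref{E:ABK}, then \eqref{E:ABKsimp} holds. Here $a(x+1)^2\leq 4xn$ gives $4nb\geq ab\,\frac{(x+1)^2}{x}=ab\bigl(x+\tfrac1x+2\bigr)$, and since $t\mapsto t+\tfrac1t$ is increasing on $[1,\infty)$ and $x\geq a/b\geq 1$, this last quantity is at least $ab\bigl(\tfrac ab+\tfrac ba+2\bigr)=(a+b)^2$. I do not anticipate any genuine obstacle in this proof; the only real idea is recognizing that $x=a/b$ is the correct value to supply to Theorem~\ref{T:ABK}, which the monotonicity of $g$ on $[1,\infty)$ makes transparent.
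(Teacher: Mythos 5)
Your proposal is correct and is essentially identical to the paper's own argument: the authors also deduce Theorem~\ref{T:ABKsimp} from Theorem~\ref{T:ABK} by substituting $x=a/b$ and using the monotonicity of $\frac{4xn}{(x+1)^2}$ on $[1,\infty)$ for the converse direction. Your extra verification of the reverse implication matches the paper's discussion of the equivalence of the two formulations, so there is nothing to add.
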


The main aim of this paper is to prove the following result.

\begin{theorem}\label{T:ABKsharp}
Suppose that $\emph{\d}$ is a finite sequence of positive integers having length $n$, maximum element $a$ and minimum element $b$.
 Then $\emph{\d}$ is bipartite graphic if
\begin{equation}\label{E:ABKgeneral}
nb\geq  \begin{cases}
\frac{(a+b)^2}4 &:\ \text{if } a\equiv b\pmod 2,\\ \\
\left\lfloor\frac{(a+b)^2}4\right\rfloor &:\ \text{otherwise},
\end{cases}
\end{equation}
where $\lfloor.\rfloor$ denotes the integer part. Moreover, for any triple $(a,b,n)$ of positive integers with $b<a\leq n$ that fails \eqref{E:ABKsimp}, there is a non-bipartite-graphic sequence of length $n$ with maximal element $a$ and minimal element $b$.
\end{theorem}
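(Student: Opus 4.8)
The plan is to reduce both parts of the theorem to the Gale--Ryser theorem. Order $\underline d$ so that $a=d_1\ge d_2\ge\dots\ge d_n=b$. Since both parts of the bipartite graph are to carry this same sequence, Gale--Ryser asserts that $\underline d$ is bipartite graphic if and only if $\sum_{i=1}^k d_i\le\sum_{j=1}^n\min\{d_j,k\}$ for every $k\in\{1,\dots,n\}$. Writing $d_i=\min\{d_i,k\}+\max\{d_i-k,0\}$, this is equivalent to the deficiency form \[\sum_{i=1}^k\max\{d_i-k,0\}\le\sum_{j=k+1}^n\min\{d_j,k\}\qquad(1\le k\le n).\] The whole argument is controlled by the quadratic $p\mapsto p(a+b-p)$, whose maximum over integers $p$ is $\lfloor(a+b)^2/4\rfloor$, attained at $p=\lfloor(a+b)/2\rfloor$ and equal to $(a+b)^2/4$ exactly when $a\equiv b\pmod 2$; this is the origin of the two cases in \eqref{E:ABKgeneral}.

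For sufficiency I would check the deficiency inequality for each $k$. Note first that $nb\ge\lfloor(a+b)^2/4\rfloor\ge ab$ forces $a\le n$. If $k\ge a$ the left-hand side is $0$ and there is nothing to prove. If $k<a$, then each term on the left satisfies $\max\{d_i-k,0\}\le a-k$, so the left-hand side is at most $k(a-k)$; on the right, every term with $k\ge b$ satisfies $\min\{d_j,k\}\ge b$, giving the lower bound $(n-k)b$, while for $k<b$ each of the $n-k$ terms equals $k$. Thus for $b\le k<a$ it suffices to have $k(a-k)\le(n-k)b$, i.e. $k(a+b-k)\le nb$, and for $k<b$ it suffices to have $a\le n$. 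Since $k(a+b-k)\le\lfloor(a+b)^2/4\rfloor\le nb$ by \eqref{E:ABKgeneral}, every case holds and $\underline d$ is bipartite graphic.

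For the sharpness clause, put $p=\lfloor(a+b)/2\rfloor$ and take the threshold sequence $\underline d=(\underbrace{a,\dots,a}_{p},\underbrace{b,\dots,b}_{\,n-p})$. From $b<a\le n$ one checks $b\le p\le a-1$ and $1\le p\le n-1$, so $\underline d$ indeed has length $n$, maximum $a$ and minimum $b$. Evaluating the deficiency inequality at $k=p$ gives left-hand side $p(a-p)$ and right-hand side $(n-p)b$, so Gale--Ryser fails precisely when $nb<p(a+b-p)=\lfloor(a+b)^2/4\rfloor$. When $a\equiv b\pmod 2$ this is exactly the failure of \eqref{E:ABKsimp}, which settles that case; in general it is the failure of the refined bound \eqref{E:ABKgeneral}, so this one sequence is not bipartite graphic and \eqref{E:ABKgeneral} cannot be lowered.

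The sufficiency estimate is elementary once the deficiency form is in place, so I expect the main obstacle to be the parity analysis near the borderline value $nb=\lfloor(a+b)^2/4\rfloor$ when $a\not\equiv b\pmod 2$. There the threshold sequence above is only just bipartite graphic, and the entire gain of \eqref{E:ABKgeneral} over \eqref{E:ABKsimp} is concentrated in this single value of $nb$; keeping track of which side of the inequality is strict, and confirming that the crude bounds used in the sufficiency step are tight exactly at $k=p$, is the delicate point to get right.
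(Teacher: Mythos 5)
Your proof is correct, and it takes a genuinely different route from both of the paper's proofs. The paper's first proof starts from the Zverovich--Zverovich form of Gale--Ryser, $\sum_{i=1}^k(d_i+in_{k-i})\leq kn$ (which is exactly your inequality $\sum_{i=1}^k d_i\le\sum_{j=1}^n\min\{d_j,k\}$ rewritten via $\sum_{j=1}^n\min\{d_j,k\}=kn-\sum_{j=0}^{k-1}(k-j)n_j$), but then invests in a reduction to \emph{strong indices} (Theorem \ref{T:ZZ7strongindices}) and in Lemma \ref{L:ABKsharpproof}, which bounds the left side by $n(k-b)+K(a+b)-K^2$ with $K$ the largest strong index; the second proof reduces to the graphic case via graphs-with-loops and the sharp Zverovich--Zverovich theorem of \cite{CMN}. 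You instead verify every Gale--Ryser inequality directly: for $b\le k<a$ you bound the deficiency $\sum_{i=1}^k\max\{d_i-k,0\}$ above by $k(a-k)$ and the capacity $\sum_{j=k+1}^n\min\{d_j,k\}$ below by $(n-k)b$, reducing everything to $k(a+b-k)\le\left\lfloor(a+b)^2/4\right\rfloor\le nb$ --- the same quadratic the paper optimizes in $K$, but evaluated at $k$ itself, which costs nothing since the maximum over integers is the same. The result is shorter and self-contained, needing neither strong indices nor the loop machinery; what the paper's extra structure buys is the intermediate results (Theorem \ref{T:ZZ7strongindices}, Lemma \ref{L:ABKsharpproof}) that the authors flag as of independent interest. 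Your sharpness witness $(a^p,b^{n-p})$ with $p=\lfloor(a+b)/2\rfloor$ is precisely the paper's Example in Section \ref{S:2e}.

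Two small remarks. First, you prove sharpness relative to \eqref{E:ABKgeneral}, which is the right reading: the theorem's literal ``Moreover'' clause refers to failure of \eqref{E:ABKsimp}, but taken at face value that clause contradicts the first part of the theorem when $a\not\equiv b\pmod 2$ and $nb=\left\lfloor(a+b)^2/4\right\rfloor$ (for instance $(a,b,n)=(4,1,6)$), so the reference must be understood as \eqref{E:ABKgeneral}, exactly as in the paper's own Example and in your argument. Second, the phrase ``Gale--Ryser fails precisely when'' overstates what you check --- you only establish (and only need) that $nb<\left\lfloor(a+b)^2/4\right\rfloor$ forces the single inequality at $k=p$ to fail; that one-directional implication is all the sharpness claim requires.
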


Let us contrast the above result with the sharp result for graphic sequences given in \cite{CMN}. We will require this result later in Section \ref{S:proofii}.

\begin{theorem}[\cite{CMN}]\label{T:zz2}
Suppose that $\emph{\d}$ is a finite sequence of positive integers having length $n$, maximum element $a$ and minimum element $b$.
 Then  $\emph{\d}$  is graphic if
\begin{equation}\label{E:zz2}
nb\geq  \begin{cases}
\left\lfloor\dfrac{(a  +  b +1)^2}{4    }\right\rfloor-1&: \ \text{if } b\ \text{is odd, or }a +b \equiv 1\pmod 4,\\ \\
\left\lfloor\dfrac{(a  +  b +1)^2}{4    }\right\rfloor&: \ \text{otherwise}.
\end{cases}
\end{equation}
Moreover, for any triple $(a,b,n)$ of positive integers with $b< a<n$ that fails \eqref{E:zz2}, there is a non-graphic sequence of length $n$ having even sum with maximal element $a$ and  minimal element $b$.
\end{theorem}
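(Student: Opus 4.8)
The plan is to reduce everything to the Erd\H{o}s--Gallai inequalities and then to control them through a single one-parameter family. Recall that a non-increasing sequence $d_1\ge\cdots\ge d_n$ of even sum is graphic if and only if $\sum_{i=1}^k d_i\le k(k-1)+\sum_{i=k+1}^n\min(d_i,k)$ for every $k$. Since a sequence can be graphic only if its entries sum to an even number, I restrict throughout to even-sum sequences; this matches the sharpness clause, which also concerns even-sum sequences. First I would sort so that $d_1=a$ and $d_n=b$, and introduce $g(k):=k(a+b+1-k)$, a downward parabola whose maximum over integers equals $\left\lfloor(a+b+1)^2/4\right\rfloor=:M$, attained at $k_0=(a+b+1)/2$ when $a+b$ is odd and at the two indices $k_0\in\{\tfrac{a+b}2,\tfrac{a+b}2+1\}$ when $a+b$ is even.

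For the sufficiency I would bound each Erd\H{o}s--Gallai inequality using the extreme configuration. For $k<b$ every $\min(d_i,k)=k$, so the inequality reads $\sum_{i\le k}d_i\le k(n-1)$, which holds because $d_i\le a\le n-1$ (no graphic sequence exists otherwise). For $k\ge b$ one has $\sum_{i\le k}d_i\le ka$ and $\min(d_i,k)\ge b$ for $i>k$, so the deficit $\sum_{i\le k}d_i-k(k-1)-\sum_{i>k}\min(d_i,k)$ is at most $g(k)-nb\le M-nb$. Hence $nb\ge M$ already forces every inequality, recovering the unrefined bound. The refinement comes from the equality analysis: the value $g(k)-nb$ is attained only by $(a^{k},b^{n-k})$, since any deviation raises the right-hand side or lowers the left-hand side by at least one, and this extremal configuration has a sum whose parity is determined by $a$ and $b$. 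I would show that precisely when $b$ is odd or $a+b\equiv1\pmod4$ this extremal sum is odd. In those cases, once $nb\ge M-1$, any violation would force the deficit to be a positive integer, hence exactly $1$, forcing $nb=M-1$ and the extremal configuration itself, which has odd sum --- impossible for an even-sum sequence; so graphicness extends down to the improved threshold.

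For the sharpness I would exhibit the non-graphic even-sum sequences explicitly, the candidates being $(a^{k_0},b^{n-k_0})$ and its neighbours $(a^{k_0\pm1},b^{n-k_0\mp1})$, each violating Erd\H{o}s--Gallai at the corresponding index with deficit $g(k)-nb$. When $a+b$ is odd the neighbouring indices $k_0$ and $k_0-1$ give sums differing by $a-b$, which is odd, so one of the two choices always has even sum; this produces an even-sum violator whenever $nb$ lies below the stated threshold. When $a+b$ is even the two central indices give sums differing by the even number $a-b$, so no parity can be gained by moving between them, and the threshold is then governed solely by the common parity of these sums, which is odd exactly when $b$ is odd.

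The main obstacle is the last, purely arithmetic, step: pinning down for which $(a,b)$ the extremal sum is odd, and reconciling this with the fact that the distinction between the thresholds $M$ and $M-1$ is only visible when $M-1$ is actually realisable as a multiple of $b$. Tracking the parity of the extremal sum together with the divisibility of $M-1$ by $b$ across the four parity classes of $(a,b)$ is what produces the exact condition ``$b$ odd, or $a+b\equiv1\pmod4$'', and this bookkeeping --- rather than any graph-theoretic input --- is where the real work lies.
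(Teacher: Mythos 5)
You should first note that this paper does not actually prove Theorem~\ref{T:zz2}: it is imported from \cite{CMN} (whose own argument runs through a strong-index reduction of Erd\H{o}s--Gallai, analogous to Section~\ref{S:BGcsi} here), so your attempt must be judged on its own merits. Your sufficiency half is essentially sound, and I have checked that the parity bookkeeping comes out right: writing $M=\lfloor(a+b+1)^2/4\rfloor$ and $g(k)=k(a+b+1-k)$, a failure of Erd\H{o}s--Gallai at some $k$ under the hypothesis $nb\ge M-1$ forces $k>b$ (for $k\le b$ the inequality reduces to $\sum_{i\le k}d_i\le k(n-1)$), then forces deficit exactly $1$, $g(k)=M$, $nb=M-1$, and the sequence to be exactly $(a^k,b^{n-k})$, whose sum is odd in every parity class in which $nb=M-1$ is arithmetically attainable; those classes are precisely ``$b$ odd or $a+b\equiv1\pmod 4$'', while in the remaining classes $M-1$ is odd and $b$ is even, so $nb=M-1$ is impossible and the thresholds $M$ and $M-1$ coincide. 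Two repairs are needed but routine: your parenthetical ``no graphic sequence exists otherwise'' for $a\le n-1$ is circular --- you must \emph{derive} $a\le n-1$ from the hypothesis, which works since $n\le a$ would give $(a-b)^2+2(a+b)\le 4$; and equality in $\min(d_i,k)\ge b$ pins $d_i=b$ only when $k>b$, so the case $k=b$ must be folded into the small-$k$ case (it is, since there the deficit is at most $\sum_{i\le b}d_i-b(n-1)\le 0$).

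The genuine gap is in the sharpness half: your candidate family of two-element sequences $(a^{k_0},b^{n-k_0})$ and its neighbours is provably too small. When $a\equiv b\pmod 2$, every sequence $(a^k,b^{n-k})$ has sum $\equiv k(a-b)+nb\equiv nb\pmod 2$, so if $n$ and $b$ are both odd \emph{no} two-element candidate has even sum; your assertion that the common parity ``is odd exactly when $b$ is odd'' is wrong (it is odd iff $nb$ is odd). Concretely, take $(a,b,n)=(5,1,7)$: here $M=12$, the threshold is $M-1=11$, and $nb=7$ fails it, so the theorem demands an even-sum non-graphic witness, but every $(5^k,1^{7-k})$ has odd sum $4k+7$. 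One must leave the two-element family, e.g.\ $(a^{k_1},b+1,b^{n-k_1-1})$ with $k_1=(a+b)/2$: for the example this is $(5,5,5,2,1,1,1)$, with even sum $20$ and Erd\H{o}s--Gallai violated at $k=3$ since $15>3\cdot 2+2+1+1+1=11$; in general this works whenever $nb\le M-2$, which is exactly what failing the threshold means in this class. Your closing speculation about ``divisibility of $M-1$ by $b$'' is a red herring: sharpness requires a witness for \emph{every} triple with $nb$ strictly below the threshold, not only when $nb=M-1$ is realised, and the missing idea is precisely this third-element modification, not further arithmetic of $M-1$.
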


We give two proofs of Theorem \ref{T:ABKsharp}. The first proof is in the spirit of the original paper of Zverovich and Zverovich, and uses the notion of \emph{strong indices}. The preparatory results for this proof, notably Theorem \ref{T:ZZ7strongindices} and Lemma \ref{L:ABKsharpproof}, may be of independent interest. Our second proof is much shorter, and uses the sharp version of Zverovich--Zverovich from \cite{CMN} and recent results relating bipartite graphic sequences to the degree sequences of graphs having at most one loop at each vertex \cite{CM2}.

The paper is organised as follows.  Section~\ref{S:2e} gives a necessary and sufficient condition for a sequence of the form $(a^s,b^{n-s})$ to be bipartite graphic. Here and throughout the paper, the superscripts indicate the number of repetitions of the element. So, for example, the sequence $(5,5,5,4,4)$ is denoted $(5^3,4^2)$. In \ Section~\ref{S:2e}  we also prove Theorem \ref{T:ABKsharp}  for sequences of the form $(a^s,b^{n-s})$, and we give examples showing that Theorem \ref{T:ABKsharp} is sharp.
Section~\ref{S:BGcsi} presents results about bipartite graphic sequences, which are used in the first proof of Theorem~\ref{T:ABKsharp} found in Section~\ref{S:proofi}. Section~\ref{S:proofii} presents the second proof of Theorem~\ref{T:ABKsharp}.

To complete this introduction, let us establish the equivalence of Theorems \ref{T:ABK} and \ref{T:ABKsimp}. If $nb\geq\frac{(a+b)^2}{4}$, then setting $x=\frac{a}{b}$, we have that \eqref{E:ABK} holds. Thus  Theorem \ref{T:ABKsimp} follows from Theorem \ref{T:ABK}. Conversely, fix $a,b,n$ and note that the hypothesis of Theorem \ref{T:ABK} is that $a\leq x b$ and $a\leq \frac{4xn}{(x+1)^2}$. Observe that $\frac{4xn}{(x+1)^2}$ is a monotonic decreasing function of $x$ for $x\geq 1$. So if $a\leq \frac{4xn}{(x+1)^2}$ holds for some $x\geq \frac{a}b$, then $a\leq \frac{4xn}{(x+1)^2}$ holds for $x= \frac{a}b$, in which case \eqref{E:ABKsimp} holds. Hence Theorem \ref{T:ABK} follows from Theorem \ref{T:ABKsimp}.

\section{Two-element sequences}\label{S:2e}

We consider two-element sequences; that is, sequences of the form $(a^s,b^{n-s})$.

\begin{theorem}\label{T:ABKab}
Let $a,b,n,s \in \N$ with $b<a\leq n$ and $s\leq n$. Then the sequence $(a^s,b^{n-s})$ is bipartite graphic if and only if $s^2-(a+b)s+nb\geq0$.
\end{theorem}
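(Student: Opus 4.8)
The plan is to invoke the Gale--Ryser Theorem~\cite{Gale,Ryser} with both prescribed degree sequences equal to $(a^s,b^{n-s})$, whose terms we write in non-increasing order as $d_1\ge\cdots\ge d_n$. Since $a>b$, we have $d_i=a$ for $1\le i\le s$ and $d_i=b$ for $s<i\le n$. The two copies have equal sum, so the Gale--Ryser criterion reduces to the single family of inequalities
\begin{equation*}
\sum_{i=1}^{k} d_i\le\sum_{i=1}^{n}\min(d_i,k)\qquad(1\le k\le n).
\end{equation*}
Setting $D(k)=\sum_{i=1}^{n}\min(d_i,k)-\sum_{i=1}^{k}d_i$, the goal is to show that $D(k)\ge0$ for every such $k$ if and only if $s^2-(a+b)s+nb\ge0$. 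Since Gale--Ryser is a genuine `if and only if', both directions of the theorem will follow simultaneously, with no separate necessity argument.

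The core of the argument is to evaluate $D(k)$ explicitly. The partial sum $\sum_{i=1}^{k}d_i$ equals $ka$ for $k\le s$ and $sa+(k-s)b$ for $k>s$, while $\sum_{i}\min(d_i,k)=s\min(a,k)+(n-s)\min(b,k)$. Substituting these, $D$ becomes piecewise linear in $k$, with breakpoints among $b$, $s$ and $a$; on each piece the sign of the slope is governed by the order of $s$ relative to $b$ and $a$. In the principal regime $b<s<a$ I would show that $D$ is decreasing on $\{b,\dots,s\}$ and increasing on $\{s,\dots,a\}$, so that its minimum over all $k$ is attained at $k=s$, where a direct computation gives
\begin{equation*}
D(s)=s^2-(a+b)s+nb.
\end{equation*}
Since the relevant boundary values satisfy $D(b)=b(n-a)\ge0$ (using $a\le n$) and $D(n)=0$, the whole family of Gale--Ryser inequalities holds precisely when $D(s)\ge0$, which is the asserted condition.

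It then remains to treat the degenerate regimes $s\le b$ and $s\ge a$, and this is where care is needed to keep the case analysis exhaustive. In each of these regimes I would verify piece by piece that $D(k)\ge0$ for all $k$---every piece being monotone with a boundary value of the form $k(n-a)$, $(n-k)b$ or $b(n-a)$, all nonnegative because $b<a\le n$---so that the sequence is always bipartite graphic. In parallel, viewing $s^2-(a+b)s+nb$ as an upward parabola in $s$ with vertex at $\tfrac{a+b}{2}$, one checks that it is nonnegative on both $\{s\le b\}$ and $\{s\ge a\}$, its values at $s=b$ and at $s=a$ being $b(n-a)\ge0$. Thus in the degenerate regimes both sides of the equivalence hold automatically, and the statement is confirmed. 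The main obstacle I anticipate is purely organisational: correctly tracking the relative order of $b$, $s$, $a$ and $n$ across the pieces so as to guarantee that no value of $k$ other than $k=s$ can violate the inequality before the critical one does.
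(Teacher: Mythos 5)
Your proposal is correct, and all the computations it commits to check out: with $D(k)=\sum_{i=1}^{n}\min(d_i,k)-\sum_{i=1}^{k}d_i$ one indeed gets $D(k)=k(n-a)$ for $k\le b$, $D(k)=k(s-a)+(n-s)b$ for $b\le k\le s$ (decreasing when $s<a$), $D(k)=k(s-b)+nb-sa$ for $s\le k\le a$ (increasing when $s>b$), and $D(k)=(n-k)b$ for $k\ge a$, so in the regime $b<s<a$ the minimum over integer $k$ is $D(s)=s^2-(a+b)s+nb$, while in the regimes $s\le b$ and $s\ge a$ every piece is bounded below by $k(n-a)$, $b(n-a)$ or $(n-k)b$, all nonnegative, matching the nonnegativity of the parabola there. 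The route differs from the paper's in its starting point: the paper invokes Theorem~8 of Zverovich--Zverovich, which already reduces the Gale--Ryser test for a two-element sequence to just two inequalities (at $k=s$ and $k=n$), shows the $k=n$ one is an identity, and then performs essentially the same case split on the position of $s$ relative to $b$ and $a$. You instead work from the raw Gale--Ryser criterion and must do the extra piecewise-monotonicity analysis to certify that no index other than $k=s$ can be the binding one; note that your $D(k)\ge 0$ is literally the paper's $\sum_{i=1}^k(d_i+in_{k-i})\le kn$, since $\sum_i\min(d_i,k)=kn-\sum_{j=0}^{k-1}(k-j)n_j$. What your version buys is self-containedness (only the classical Gale--Ryser theorem is cited) at the cost of a longer verification; the paper's version outsources the reduction to a citation. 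To finish, you should write out the monotonicity checks you currently only sketch, and note explicitly that the equal-sums hypothesis of Gale--Ryser is automatic because both parts carry the same sequence.
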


\begin{proof}
We will employ \cite[Theorem~8]{ZZ}, from which we have in particular: a two-element sequence $\d=(a^s,b^{n-s})$  is bipartite graphic if and only if
\begin{equation}\label{ineqs}
\sum_{i=1}^s(a+in_{s-i}) \leq sn\qquad \text{and}\qquad \sum_{i=1}^s(a+in_{n-i})+\sum_{i=s+1}^n(b+in_{n-i})\leq n^2,
\end{equation}
 where $n_j$ is the number of elements of $\d$ equal to $j$; that is,
\[
n_j=\begin{cases}
s&:\ \text{if}\ j=a\\
n-s&:\ \text{if}\ j=b\\
0&:\ \text{otherwise}.
\end{cases}
\]
Notice that the second inequality in \eqref{ineqs} is always satisfied.
Indeed, \begin{align*}
\sum_{i=1}^s(a+in_{n-i})+\sum_{i=s+1}^n(b+in_{n-i})&=as+(n-s)b+\sum_{j=0}^{n-1}(n-j)n_j\\
&= s(a-b)+nb+(n-a)s+(n-b)(n-s)=n^2.
\end{align*}
So, rewriting the first inequality in \eqref{ineqs}, we have that $\d=(a^s,b^{n-s})$  is bipartite graphic if and only if
\begin{equation}\label{ineqf}
\sum_{j=0}^{s-1}(s-j)n_j   \leq s(n-a).
\end{equation}
If $b<s\leq a$, then
$\sum_{j=0}^{s-1}(s-j)n_j=(s-b)(n-s)$ and hence
 \[\sum_{j=0}^{s-1}(s-j)n_j   \leq s(n-a) \iff s^2-(a+b)s+nb\geq0,\]
as required. It remains to consider the cases $s\leq b$ and $a<s$. If $s\leq b$, then \[\sum_{j=0}^{s-1}(s-j)n_j=0\leq s(n-a).\]\ If $a<s$, then
\[
\sum_{j=0}^{s-1}(s-j)n_j=(s-a)s+(s-b)(n-s)=s(n-a)-b(n-s)\leq s(n-a).
\]
The inequality $s^2-(a+b)s+nb\geq0$ holds in both these cases. Indeed, the minimum of the function $f(s)=s^2-(a+b)s+nb$ occurs at $s=\frac{a+b}2$ so $f(s)$ is decreasing for $s\leq b$, and increasing for $a<s$, and $f(a)=f(b)=(n-a)b\geq0$.
\end{proof}

\begin{example}
First assume $a\equiv b\pmod2$ and $4nb<(a+b)^2$. Then the sequence \[(a^{\frac{a+b}2},b^{\frac{2n-a-b}2})\] is not bipartite graphic by Theorem~\ref{T:ABKab}. Now assume $a\not\equiv b\pmod2$ and $4nb<(a+b)^2-1$. Then \[(a^{\frac{a+b+1}2},b^{\frac{2n-a-b-1}2})\] is not bipartite graphic, again by Theorem~\ref{T:ABKab}. These examples show that the bound given in Theorem \ref{T:ABKsharp} is sharp.
\end{example}

\begin{remark}\label{R:equiv}
Note that for two-element sequences, we can deduce Theorem \ref{T:ABKsharp} from Theorem \ref{T:ABKab}. Indeed, suppose that
$\d=(a^s,b^{n-s})$ and that
\[
nb\geq  \begin{cases}
\frac{(a+b)^2}4 &:\ \text{if } a\equiv b\pmod 2,\\ \\
\left\lfloor\frac{(a+b)^2}4\right\rfloor &:\ \text{otherwise}.
\end{cases}
\]
As we observed in the proof of Theorem \ref{T:ABKab},
the minimum of the function $f(s)=s^2-(a+b)s+nb$ occurs at $\frac{a+b}2$.
If $a+b$ is even, then
\[
f(s)\geq f\left(\frac{a+b}2\right)=nb-\frac{(a+b)^2}4\geq 0,
\]
and so $\d$ is bipartite graphic by Theorem \ref{T:ABKab}.
So we may suppose that $a+b$ is odd. Then as $s$ is an integer,
\[
f(s)\geq f\left(\frac{a+b-1}2\right)=nb-\frac{(a+b)^2-1}4=nb- \left\lfloor\frac{(a+b)^2}4\right\rfloor \geq 0.
\]
Hence $\d$ is bipartite graphic by Theorem \ref{T:ABKab}.
\end{remark}

\section{Strong indices}\label{S:BGcsi}

In this section, $\d=(d_1,\dots,d_n)$ is a decreasing sequence of positive integers and for each integer $j$,  the number of elements in $\d$ equal to $j$ is denoted $n_j$.
As a particular case of \cite[Theorem~7]{ZZ}, one has the following.

\begin{theorem}[{\cite{ZZ}}]\label{T:ZZ7}
The sequence  $\emph{\d}$ is bipartite graphic if and only if $\sum_{i=1}^k(d_i+in_{k-i})\leq kn$, for all  indices $k$.
\end{theorem}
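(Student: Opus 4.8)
The cleanest route is to deduce this from the Gale--Ryser Theorem applied to the symmetric pair $(\d,\d)$, rather than unwinding the general statement of \cite[Theorem~7]{ZZ}. Since the two parts share the same degree list, the necessary equal-sum hypothesis of Gale--Ryser is automatic, and the criterion reduces to the dominance condition
\[
\sum_{i=1}^k d_i \leq \sum_{j=1}^n \min(d_j,k)\qquad\text{for all }k,
\]
whose right-hand side counts, across all $n$ vertices of one part, the number of edges each may send to the $k$ largest-degree vertices of the other part. So the task is purely to show that this dominance inequality is equivalent to $\sum_{i=1}^k(d_i+in_{k-i})\leq kn$.

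First I would rewrite the correction term through the multiplicities. Substituting $j=k-i$ gives
\[
\sum_{i=1}^k i\,n_{k-i}=\sum_{j=0}^{k-1}(k-j)n_j,
\]
and since the $d_i$ are positive we have $n_0=0$, so the lower index causes no trouble. Next I would evaluate the Gale--Ryser right-hand side by splitting each value $v$ according to whether $v\geq k$ or $v<k$:
\[
\sum_{j=1}^n \min(d_j,k)=k\sum_{v\geq k}n_v+\sum_{v=1}^{k-1}v\,n_v=kn-\sum_{v=0}^{k-1}(k-v)n_v,
\]
where I have used $\sum_{v\geq k}n_v=n-\sum_{v=0}^{k-1}n_v$. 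Combining the two identities, the dominance condition $\sum_{i=1}^k d_i\leq\sum_{j=1}^n\min(d_j,k)$ becomes exactly $\sum_{i=1}^k d_i\leq kn-\sum_{i=1}^k i\,n_{k-i}$, which rearranges to the claimed inequality. Running the computation in reverse establishes the converse, so the two criteria agree for every $k$.

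There is no genuine obstacle here: the statement is billed as a particular case of \cite[Theorem~7]{ZZ}, and one could simply set both sequences equal in that result. The only point requiring care is the index bookkeeping---the substitution $j=k-i$, the split of the $\min$ sum at $v=k$, and the harmless role of the $v=0$ term---all of which are routine. I would nonetheless prefer the Gale--Ryser derivation to quoting \cite[Theorem~7]{ZZ} verbatim, since it keeps the argument self-contained and makes transparent why the multiplicities $n_{k-i}$ appear.
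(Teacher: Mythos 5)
Your derivation is correct, but it is worth noting that the paper offers no proof of this statement at all: Theorem \ref{T:ZZ7} is presented purely as a citation, ``a particular case of \cite[Theorem~7]{ZZ},'' obtained by specialising that more general criterion for pairs of sequences to the symmetric pair $(\d,\d)$. Your route instead goes straight to the Gale--Ryser Theorem, observes that the equal-sum hypothesis is automatic for $(\d,\d)$, and then verifies that the dominance condition $\sum_{i=1}^k d_i\leq\sum_{j=1}^n\min(d_j,k)$ is identical to $\sum_{i=1}^k(d_i+in_{k-i})\leq kn$ via the substitution $j=k-i$ and the splitting of the $\min$ sum at $v=k$. I checked the bookkeeping: $\sum_{i=1}^k i\,n_{k-i}=\sum_{j=0}^{k-1}(k-j)n_j$, $\sum_{j=1}^n\min(d_j,k)=kn-\sum_{v=0}^{k-1}(k-v)n_v$, and the $v=0$ terms vanish since the entries are positive; the two criteria do coincide for each $k\in\{1,\dots,n\}$, which is the range both Gale--Ryser and the theorem's ``all indices'' refer to. The one hypothesis you use implicitly is that $\d$ is decreasing, which the paper fixes at the start of Section~\ref{S:BGcsi}, so that the partial sums $\sum_{i=1}^k d_i$ are the sums of the $k$ largest entries as Gale--Ryser requires. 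What your approach buys is self-containedness and transparency about where the multiplicities $n_{k-i}$ come from; what the paper's citation buys is brevity and consistency, since it reuses the same Zverovich--Zverovich machinery (their Theorem~8) already invoked in the proof of Theorem~\ref{T:ABKab}.
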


Recall the following standard definition.
\begin{definition}
In the sequence $\d$, an index is said to be \emph{strong} if $d_k\geq k$.
\end{definition}

The following result improves Theorem~\ref{T:ZZ7}.

\begin{theorem}\label{T:ZZ7strongindices}
The sequence  $\emph{\d}$  is bipartite graphic if and only if $\sum_{i=1}^k(d_i+in_{k-i})\leq kn$, for all strong indices $k$.
\end{theorem}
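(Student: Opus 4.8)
The plan is to reduce everything to Theorem~\ref{T:ZZ7} by showing that the inequalities at non-strong indices are automatically implied by those at strong indices. First I would record a convenient reformulation of the quantity in Theorem~\ref{T:ZZ7}. Substituting $j=k-i$ gives $\sum_{i=1}^k i\,n_{k-i}=\sum_{j=0}^{k-1}(k-j)n_j$, and a direct count (in the spirit of the computation in the proof of Theorem~\ref{T:ABKab}) shows $\sum_{j=0}^{k-1}(k-j)n_j=kn-\sum_{i=1}^n\min(d_i,k)$. Hence the inequality $\sum_{i=1}^k(d_i+in_{k-i})\le kn$ is equivalent to $F(k)\le 0$, where
\[
F(k)=\sum_{i=1}^k d_i-\sum_{i=1}^n\min(d_i,k),
\]
and I set $F(0)=0$. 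The theorem then amounts to the statement that $F(k)\le 0$ for every strong index $k$ implies $F(k)\le 0$ for every index $k$; the converse is immediate from Theorem~\ref{T:ZZ7}.

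Next I would observe that, since $\d$ is decreasing, the strong indices form an initial segment $\{1,\dots,m\}$: if $d_k\ge k$ and $j<k$, then $d_j\ge d_k\ge k>j$. So it suffices to prove $F(p)\le 0$ for each non-strong index $p$ (that is, $p>m$, so $d_p<p$), assuming $F(k)\le 0$ for $1\le k\le m$. The key device is to attach to $p$ the auxiliary index $r:=\#\{i:d_i\ge p\}$, the number of elements of $\d$ that are at least $p$. Since $d_p<p$, fewer than $p$ elements can be $\ge p$, so $r<p$; and if $r\ge 1$ then $d_r\ge p>r$, so $r$ is a strong index and $F(r)\le 0$ by hypothesis (while if $r=0$ we simply use $F(0)=0$). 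Note also that $d_i\ge p$ holds exactly for $i\le r$.

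The heart of the argument is then the inequality $F(p)\le F(r)$. Computing the difference term by term, for $i\le r$ one has $\min(d_i,p)-\min(d_i,r)=p-r$, while for $i>r$ one has $d_i<p$ and $\min(d_i,p)-\min(d_i,r)=(d_i-r)^+$, where $(x)^+=\max(x,0)$. This gives
\[
F(p)-F(r)=\sum_{i=r+1}^{p}d_i-r(p-r)-\sum_{i>r}(d_i-r)^+.
\]
Discarding the nonnegative tail terms with $i>p$ and using $d_i-(d_i-r)^+=\min(d_i,r)\le r$ on the remaining $p-r$ indices yields
\[
F(p)-F(r)\le\sum_{i=r+1}^{p}\min(d_i,r)-r(p-r)\le r(p-r)-r(p-r)=0.
\]
Hence $F(p)\le F(r)\le 0$, which is exactly what is needed; applying Theorem~\ref{T:ZZ7} then finishes the proof.

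I expect the main obstacle to be the bookkeeping in this final estimate—in particular, isolating the correct auxiliary index $r=\#\{i:d_i\ge p\}$, verifying that it is strong, and keeping the $\min$ and positive-part manipulations exact so that the two copies of $r(p-r)$ cancel. Everything else is routine re-indexing.
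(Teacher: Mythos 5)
Your proposal is correct, and every step checks out: the identity $\sum_{j=0}^{k-1}(k-j)n_j=kn-\sum_{i=1}^n\min(d_i,k)$ is right, the conjugate index $r=\#\{i:d_i\ge p\}$ is indeed strong (or zero) whenever $p$ is non-strong, and the chain $F(p)-F(r)\le\sum_{i=r+1}^{p}\min(d_i,r)-r(p-r)\le 0$ is exact. However, your route is genuinely different from the paper's. The paper works directly with $F_k=kn-\sum_{i=1}^k(d_i+in_{k-i})$ and argues by locating the smallest index at which $F$ attains its minimum: at such a non-strong local minimum $k$ it derives $d_{k+1}\le f(k)<d_k-n_k$ for $f(k)=\max\{p:d_p\ge k+1\}$, deduces that $p=f(k)$ is strong with $f(p)=k$, and then invokes an exact symmetry $F_k=F_p$ (with a separate computation $F_n=0$ to dispose of the case $p=0$). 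Your argument replaces this local-minimum analysis and exact identity with a single unconditional inequality $F(p)\le F(r)$ valid at \emph{every} non-strong index, at the cost of first passing to the Gale--Ryser-type reformulation via $\min(d_i,k)$. Both proofs hinge on the same underlying duality --- attaching to a non-strong index the index counting the entries that are at least $p$ --- but yours is shorter and avoids the discrete-derivative bookkeeping $F_{k+1}-F_k=f(k)-d_{k+1}$; the paper's version buys the sharper structural fact that $F_k=F_{f(k)}$ whenever $f(f(k))=k$, which it packages as Lemma~\ref{L:ZZsiproof} for possible independent use.
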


\begin{proof}
Necessity follows from Theorem~7 in~\cite{ZZ}. To prove sufficiency, define
\[
    F_k=kn-\sum_{i=1}^k(d_i+in_{k-i})=kn-\sum_{i=1}^kd_i-\sum_{i=0}^k(k-i)n_i.
\]
Suppose that $F_{k}\geq0$ for all strong indices $k$. We will show that $F_{k}\geq0$ for all indices $k$. To do this, we show that the minimum value of
$F_{k}$, for $k=1,2,\dots,n$, is nonnegative, and to do this we look at the smallest $k$ for which $F_{k}$ assumes the minimum value. Thus it suffices  to show that $F_1$ and $F_n$ are nonnegative and $F_k\geq0$ for all $k=2,\dots,n-1$ such that $F_{k-1}>F_k$ and $F_{k+1}\geq F_k$.
We will make use of the following lemma. Define the function $f:\N \cup \{0\} \to \N \cup \{0\}$ as follows: $f(k)=\max\{p \, : \, d_p \ge {k+1}\}$, with the convention that $\max \varnothing = 0$.
\begin{lemma}\label{L:ZZsiproof}
For the sequence $\emph{\d}$, suppose that $n \ge d_1$. For a given $k=0,1,\dots, n$, denote $p=f(k)$. Then, in the above notation,
\begin{enumerate}[{\rm (a)}]
  \item \label{it:pfkonestrong}
  if $k,p>0$, then at least one of them is a strong index,

  \item \label{it:pfksumns}
  $\sum_{s=k+1}^nn_s=p$ and $\sum_{s=0}^nn_s=n$,

  \item \label{it:pfksumsns}
  $\sum_{s=k+1}^nsn_s=\sum_{i=1}^pd_i$ and $\sum_{s=0}^nsn_s=\sum_{i=1}^{n}d_i$,

  \item \label{it:pfksymm}
  $F_k=\sum_{i=1}^{n}d_i -\sum_{i=1}^{k}d_i -\sum_{i=1}^{p}d_i +kp$. In particular, if  $f(p)=k$, then $F_k=F_p$.
\end{enumerate}
\end{lemma}
\begin{proof}
(a) Suppose $k$ is not a strong index, so that $k > d_k$. As $p =f(k)$ is assumed to be positive we have $p \in \{1, \dots, n\}$ and moreover, $d_p \ge k+1 > d_k$. So, as $\d$ is decreasing, $p < k$. Thus  $d_p \ge k+1 > p$ and so $p$ is a strong index, as required.

(b) The left-hand side of the first equality equals $\#\{s\,: \,d_s \ge k+1\}=p$ by definition. The second equality is obvious.

(c) For an arbitrary $s \ge 0$ we have $sn_s=\sum_{i: d_i=s} d_i$. It follows that $\sum_{s=k+1}^nsn_s=\sum_{s=k+1}^n \sum_{i: d_i=s} d_i=\sum_{i: d_i \ge k+1} d_i =\sum_{i=1}^pd_i$. This proves the first equality; the second equality is obvious.


(d) We have by \eqref{it:pfksumns} and \eqref{it:pfksumsns}:
\begin{align*}
    F_k &=kn-\sum\nolimits_{i=1}^k d_i - k \sum\nolimits_{i=0}^k n_i + \sum\nolimits_{i=0}^k i n_i \\
    &= k \Big(n-  \sum\nolimits_{i=0}^k n_i\Big) -\sum\nolimits_{i=1}^k d_i + \sum\nolimits_{i=0}^n i n_i - \sum\nolimits_{i=k+1}^nin_i \\
    &= kp -\sum\nolimits_{i=1}^{k}d_i + \sum\nolimits_{i=1}^{n}d_i-\sum\nolimits_{i=1}^{p}d_i ,
\end{align*}
as required. If not only $f(k)=p$, but also $f(p)=k$, then $F_k=F_p$, as the latter expression for $F_k$ is symmetric with respect to $k$ and $p$.
\end{proof}
Continuing with the proof of the theorem, by Lemma~\ref{L:ZZsiproof}\eqref{it:pfksumns},
\begin{equation}\label{eq:FminusF}
    F_{k+1}-F_k=n-d_{k+1} - \sum\nolimits_{i=0}^k n_i=\sum\nolimits_{i=k+1}^n n_i-d_{k+1}= f(k) - d_{k+1}.
\end{equation}
Moreover, $F_n=n^2-\sum_{i=1}^n d_i- n\sum_{i=0}^n n_i+ \sum_{i=0}^n in_i=0$ by Lemma~\ref{L:ZZsiproof}(\ref{it:pfksumns}, \ref{it:pfksumsns}) and $F_1\geq0$ by assumption, as $d_1\geq1$. By \eqref{eq:FminusF} and Lemma~\ref{L:ZZsiproof}(\ref{it:pfksumns}), the inequalities $F_{k-1}>F_k$ and $F_{k+1}\geq F_k$ give
\begin{align*}
    F_{k+1}-F_k&=f(k) - d_{k+1} \ge 0, \\
    F_k-F_{k-1}&=f(k-1) - d_{k} = f(k)+ n_k - d_k < 0.
\end{align*}
That is,
\begin{equation}\label{eq:dkdk+1}
 d_{k+1} \le f(k) < d_k-n_k.
\end{equation}
Let $k$ be a non-strong index for which~\eqref{eq:dkdk+1} holds. Denote $p=f(k)$. If $p>0$, then $p$ is a strong index by Lemma~\ref{L:ZZsiproof}\eqref{it:pfkonestrong}, hence $F_p \ge 0$ by assumption. Moreover, by \eqref{eq:dkdk+1} we have $d_{k+1} \le p$ and $d_k > p+n_k$ so $d_k \ge p+1$ and $d_{k+1} < p+1$. It follows that $k=\max\{s \, : \, d_s \ge {p+1}\}$, so $f(p)=k$ by definition. Then, by  Lemma~\ref{L:ZZsiproof}\eqref{it:pfksymm}, we have $F_k=F_p \ge 0$. So we may assume that  $p=0$. Then $d_{k+1}=0$, by \eqref{eq:dkdk+1}, and hence $d_j = 0$ for all $j > k$. Furthermore,  as $f(k)=p=0$, we have $\{s\,:\,d_s\geq k+1\}=\varnothing$,  and so $n_i = 0$ for all $i>k$. So by \eqref{eq:FminusF}, for every $j > k$ we have $F_j-F_{j-1}=\sum_{i=j}^n n_i-d_j=0$. Thus $F_k=F_n$. As $F_n=0$ from the above, we get  $F_k=0$, as required.
\end{proof}

In the next section, we will also need the following lemma, which is a variation of  \cite[Lemma~1]{CMN}.

\begin{lemma}\label{L:ABKsharpproof}
Suppose that $\emph{\d}$ has maximum element $a=d_1\leq n$ and minimum element $b=d_n$. For every strong index $k>b$, we have
\[\sum_{i=1}^k(d_i+in_{k-i})\leq n(k-b)+K (a+b)-K ^2,\] where $K$ is the largest strong index, $K=\max\{k:d_k\geq k\}$.
\end{lemma}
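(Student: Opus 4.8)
The plan is to estimate the left-hand side directly from the extremal values $a$ and $b$, without invoking the $F_k$ machinery of the previous proof (although one can check that the asserted inequality is equivalent to $F_k\ge K^2-(a+b)K+nb$, which ties it back to the quadratic $f(s)=s^2-(a+b)s+nb$ of Theorem~\ref{T:ABKab}). First I would split
\[
\sum_{i=1}^k(d_i+in_{k-i})=\sum_{i=1}^k d_i+\sum_{i=1}^k i\,n_{k-i},
\]
and re-index the second sum by setting $j=k-i$, giving $\sum_{j=0}^{k-1}(k-j)n_j$. Since $n_j$ counts the entries of $\d$ equal to $j$ and $\d$ is decreasing, this equals $\sum_{i=q+1}^n(k-d_i)$, where $q=\max\{i:d_i\ge k\}$ is the number of entries that are at least $k$ (equivalently $q=f(k-1)$ in the notation of Lemma~\ref{L:ZZsiproof}); all entries with index $i>q$ satisfy $d_i\le k-1$.

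Next I would apply the two crude bounds coming only from the extremes. Each $d_i\le a$ gives $\sum_{i=1}^k d_i\le ka$, while each $d_i\ge b$ gives $k-d_i\le k-b$, so, using that $k>b$ and hence $k-b>0$, we get $\sum_{i=q+1}^n(k-d_i)\le (n-q)(k-b)$. Combining these yields $\sum_{i=1}^k(d_i+in_{k-i})\le ka+(n-q)(k-b)$. The key structural observation is that $K\le q$: since $k$ is a strong index and $K$ is the largest strong index, $k\le K$, whence $d_K\ge K\ge k$, so $K$ lies in the set defining $q$. Because $k-b>0$, replacing $q$ by the smaller quantity $K$ only increases the bound, giving
\[
\sum_{i=1}^k(d_i+in_{k-i})\le ka+(n-K)(k-b)=n(k-b)+ka-K(k-b).
\]

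It then remains to verify $ka-K(k-b)\le K(a+b)-K^2$, which after cancellation is exactly $(k-K)(a-K)\le 0$. This is the crux of the argument, and it follows from two facts about the largest strong index: first $k\le K$, so $k-K\le0$; and second $K\le d_K\le d_1=a$, so $a-K\ge0$. The main point to get right is the bookkeeping around $q$ — confirming that $q=\max\{i:d_i\ge k\}$ captures the entries below $k$, that $k\le K\le q$, and that the monotonicity in $q$ runs in the favourable direction precisely because $k>b$ — after which the final algebraic reduction to the sign condition $(k-K)(a-K)\le0$ is a one-line computation.
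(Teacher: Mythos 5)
Your proof is correct and follows essentially the same route as the paper: bound $\sum_{i=1}^k d_i$ by $ka$, bound the weighted count of small entries by $(k-b)(n-K)$ using $d_i\ge b$ and the fact that at least $K$ entries are $\ge k$, and finish with the sign observation $(k-K)(a-K)\le 0$ coming from $k\le K\le d_K\le a$. The only cosmetic difference is that you route the counting through $q=\max\{i:d_i\ge k\}$ before relaxing to $K$, whereas the paper bounds $\sum_{j=0}^{k-1}n_j\le n-K$ directly.
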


\begin{proof}
Let $k>b$ be a strong index.  We have $\sum_{i=1}^k d_i\leq ka$. Furthermore, since $n_j=0$ for $j<b$, we have
\[
\sum_{i=1}^kin_{k-i}=\sum_{j=0}^{k-1}(k-j)n_j\leq (k-b)\sum_{j=0}^{k-1}n_j.
\]
 The sum $\sum_{j=0}^{k-1}n_j$ counts the number of elements of $\d$ strictly less than $k$, hence $\sum_{j=0}^{k-1}n_j\leq n-K $ as $d_{K }\geq K \geq k$. Hence
\begin{equation}\label{eq:rk}
\sum_{i=1}^k(d_i+in_{k-i})\le ka+(k-b)(n-K ).
\end{equation}
As $a\geq d_{K }\geq K $, we have $a+1-K \geq 1$. Thus, using $k\leq K $, inequality \eqref{eq:rk} gives
\begin{align*}
\sum_{i=1}^k(d_i+in_{k-i}) \le ka+(k-b)(n-K )&=kn+k(a-K )+bK -bn \\
&\le kn+K (a-K )+bK -bn\\
&=n(k-b)+K (a+b)-K ^2,
\end{align*}
as required.
\end{proof}

\section{First Proof  of Theorem~\ref{T:ABKsharp}}\label{S:proofi}

Let $\d$ be a sequence satisfying hypothesis \eqref{E:ABKgeneral} of Theorem~\ref{T:ABKsharp}. If $a\equiv b\pmod 2$, then the result follows from Theorem~\ref{T:ABKsimp}. So we may assume that $a,b$ have different parity. Let $k$ be a strong index and suppose first that $k>b$. By Lemma~\ref{L:ABKsharpproof},
\begin{equation}\label{E:le}
\sum_{i=1}^k(d_i+in_{k-i})\leq n(k-b)+K (a+b)-K ^2,
\end{equation}
where $K$ denotes the largest strong index. As a quadratic in $K$, the maximal value of $n(k-b)+K (a+b)-K ^2$ is attained at $K =\frac{a+b\pm1}2$ and
\[
n(k-b)+\frac{(a+b\pm1)}2(a+b)-\left(\frac{a+b\pm1}2\right)^2=n(k-b)+\frac14(a+b)^2-\frac14.
\]
Hence, since $nb\geq\left\lfloor\frac{(a+b)^2}4\right\rfloor=\frac{(a+b)^2}4-\frac14 $, we have
 \[
 n(k-b)+K (a+b)-K ^2\leq n(k-b)+\frac14(a+b)^2-\frac14\leq kn.\]
 So by \eqref{E:le},  we have $\sum_{i=1}^k(d_i+in_{k-i})\leq kn$ and hence $\d$ is bipartite graphic by Theorem \ref{T:ZZ7strongindices}.
On the other hand, if $k\leq b$, then $\d$ contains no elements less than $k$ and hence
\begin{equation}\label{E:ka}
\sum_{i=1}^k(d_i+in_{k-i})=\sum_{i=1}^k d_i\leq ka.
\end{equation}
Note that $n\geq a$, since otherwise by \eqref{E:ABKgeneral}, we would have $ab> nb\geq \frac{(a+b)^2-1}4$, and hence $(a-b)^2<1$, giving $a=b$, which is impossible as $a,b$ have different parity.
So \eqref{E:ka} gives  $\sum_{i=1}^k(d_i+in_{k-i})\leq kn$ and once again,  $\d$ is bipartite graphic by Theorem \ref{T:ZZ7strongindices}.

\section{Second Proof  of Theorem~\ref{T:ABKsharp}}\label{S:proofii}

Suppose we have a decreasing sequence $\d=(a,\dots,b)$ of length $n$, and suppose it satisfies hypothesis \eqref{E:ABKgeneral} of Theorem \ref{T:ABKsharp}. By Remark \ref{R:equiv}, we may assume that $\d$ has at least 3 distinct elements.
Suppose that $n_a=s$; that is, $\d$ has precisely $s$ elements equal to $a$. Now consider the sequence $\d\,'$ obtained from $\d$ by reducing the first $s$ elements of $\d$ by 1. So $\d\,'$ has maximal element $a'=a-1$. Note that $\d$ has at least 3 distinct elements, hence the minimum element of $\d\,'$ is still $b$.
Suppose for the moment that $\d\,'$ has even sum. We will show that $\d\,'$ is graphic. From \eqref{E:ABKgeneral}, we have
\[
nb\geq  \begin{cases}
\frac{(a+b)^2}{4}&:\ \text{if } a\equiv b\pmod 2,\\ \\
\left\lfloor\frac{(a+b)^2}4\right\rfloor&:\ \text{otherwise},
\end{cases}
\]
We will show that
\begin{equation}\label{E:gs}
nb\geq  \begin{cases}
\left\lfloor\dfrac{(a'  +  b +1)^2}{4    }\right\rfloor-1&: \ \text{if } b\ \text{is odd, or }a' +b \equiv 1\pmod 4,\\ \\
\left\lfloor\dfrac{(a'  +  b +1)^2}{4    }\right\rfloor&: \ \text{otherwise},
\end{cases}
\end{equation}
from which we can conclude that $\d\,'$ is graphic by Theorem \ref{T:zz2}.
Consider two cases according to whether or not $a\equiv b \pmod 2$.
If  $a\equiv b \pmod 2$, then our hypothesis is $nb\geq \frac{(a+b)^2}{4}$, and hence
\[
nb\geq \frac{(a'+b+1)^2}{4} =\left\lfloor\dfrac{(a'  +  b +1)^2}{4    }\right\rfloor, \]
and so  \eqref{E:gs} holds. Similarly, if  $a\not\equiv b \pmod 2$, then our hypothesis is $nb\geq \left\lfloor\frac{(a+b)^2}4\right\rfloor$, and hence
\[
nb\geq \left\lfloor\frac{(a'+b+1)^2}{4}\right\rfloor , \]
and again \eqref{E:gs} holds.
Thus in either case, $\d\,'$ is graphic.

We now use a  result of \cite{CM2}. By a \emph{graph-with-loops} we mean a graph, without multiple edges, in which there is at most one loop at each vertex. For a graph-with-loops,
 the \emph{reduced degree} of a vertex is  taken to be the number of edges incident to the vertex, with \emph{loops counted once}. This differs from the usual definition of degree in which each loop contributes two to the degree.
By \cite[Corollary 1]{CM2},  a sequence  $\d$ of positive integers  is the sequence of  reduced degrees of the vertices of a graph-with-loops if and only if $\d$ is bipartite graphic. In our case, $\d\,'$ is graphic. Take a realization of $\d\,'$ as the degree sequence of some graph $G'$, and label the vertices of $G'$ in the same order as $\d\,'$.
Now add a loop to each of the first $s$ nodes of $G'$ and call the resulting graph-with-loops $G$. So the sequence of reduced degrees of $G$ is $\d$. Thus by \cite[Corollary 1]{CM2},  $\d$ is bipartite graphic.

It remains to deal with the case where $\d\,'$ has odd sum. Since $\d$ has at least 3 distinct elements,  we can modify the above construction as follows: we take the sequence $\d\,''$ obtained from $\d$ by reducing the first $(s+1)$ elements of $\d$ by 1. Then $\d\,''$ has even sum, maximum element $a-1$ and minimum element $b$, and we proceed exactly as above, only adding $s+1$ loops.
\providecommand{\bysame}{\leavevmode\hbox to3em{\hrulefill}\thinspace}
\providecommand{\MR}{\relax\ifhmode\unskip\space\fi MR }
\providecommand{\MRhref}[2]{%
  \href{http://www.ams.org/mathscinet-getitem?mr=#1}{#2}
}
\providecommand{\href}[2]{#2}

  \end{document}